\providecommand{\R}{\mathbb R}
\providecommand{\drm}{\mathrm d}
\providecommand{\E}{\mathsf E}
\providecommand{\prob}{\mathsf P}
\providecommand{\tr}{\mathrm{tr}}
\providecommand{\e}{\mathrm e}
\providecommand{\sca}[1]{\langle #1 \rangle}
\providecommand{\abs}[1]{\lvert #1 \rvert}
\providecommand{\norm}[1]{\lVert #1 \rVert}
\providecommand{\det}{\mathop{\mathrm{det}}\nolimits}
\providecommand{\tr}{\mathop{\mathrm{tr}}\nolimits}
\newtheorem{thm}{Theorem}
\newtheorem{lem}[thm]{Lemma}
\newtheorem{prop}[thm]{Proposition}
\theoremstyle{remark}
\newtheorem*{rem}{Remark}
\newtheorem*{ex}{Example}
\theoremstyle{definition}
\newtheorem{defi}[thm]{Definition}
\title{Short probabilistic proof of the Brascamp-Lieb  and Barthe theorems}
\author{Joseph Lehec\footnote{Universit\'e Paris-Dauphine}}
\begin{document}

\maketitle

\section{Introduction}
A Brascamp-Lieb datum on $\R^n$ is a finite sequence
\begin{equation}
\label{data}
(c_1,B_1 ) , \dotsc , (c_m,B_m) 
\end{equation}
where $c_i$ is a positive number and $B_i \colon \R^n \to \R^{n_i}$ is linear and onto.
The Brascamp-Lieb constant associated to this datum is the smallest real number $C$ 
such that the inequality
\begin{equation}
\label{BL}
\int_{\R^n} \prod_{i=1}^m ( f_i \circ B_i )^{c_i}  \, \drm x
\leq C \prod_{i=1}^m \bigl( \int_{\R^{n_i}} f_i \, \drm x \bigr)^{c_i} 
\end{equation}
holds for every set of non-negative integrable functions $f_i\colon \R^{n_i} \to \R$.
The Brascamp-Lieb theorem~\cite{bl,lieb} asserts that~\eqref{BL}
is saturated by Gaussian functions. In other words
if~\eqref{BL} holds for every functions $f_1,\dotsc,f_m$ of the form
\[
f_i ( x ) = \e^{- \sca{ A_i x ,  x } / 2} 
\]
where $A_i$ is a symmetric positive definite matrix on $\R^{n_i}$
then~\eqref{BL} holds for every set of functions $f_1,\dotsc,f_m$.

The reversed Brascamp-Lieb constant associated to~\eqref{data}
is the smallest constant $C_r$ 
such that for every non-negative measurable functions $f_1,\dotsc,f_m,f$
satisfying
\begin{equation}
\label{RBL-hyp}
\prod_{i=1}^m f_i ( x_i )^{c_i} \leq f \Bigl( \sum_{i=1}^m c_i B_i^* x_i \Bigr)
\end{equation}
for every $(x_1 ,\dotsc, x_m ) \in \R^{n_1}\times \dotsb\times\R^{n_m}$ we have
\begin{equation}
\label{RBL}
 \prod_{i=1}^m \bigl( \int_{\R^{n_i}} f_i \, \drm x\bigr)^{c_i}
 \leq C_r \int_{\R^n} f   \, \drm x .
\end{equation}
It was shown by Barthe~\cite{barthe} that again Gaussian functions
saturate the inequality. The original paper of Brascamp and Lieb~\cite{bl}
rely on symmetrization techniques. Barthe's argument uses optimal transport
and works for both the direct and the reversed inequality. 
More recent proofs of the direct inequality~\cite{bbc,bcct,cc,cll}
all rely on semi-group techniques. 
Barthe and Huet~\cite{bh} have a semi-group 
argument that works for both the direct and reversed inequality, 
provided the Brascamp-Lieb datum satisfies 
\begin{equation}
\label{frame}
\begin{split}
& B_i B_i^* = \mathrm{id}_{\R^{n_i}} , \ \forall i \leq m,\\
& \sum_{i=1}^m c_i B_i^* B_i = \mathrm{id}_{\R^n} .
\end{split}
\end{equation}
This constraint is called the \emph{frame condition} hereafter. 
\\
The purpose of this article is to give a short
probabilistic proof of the Brascamp-Lieb and Barthe theorems.
 Our main tool shall be a representation formula for the quantity
\[ \ln \Bigl( \int \e^{g ( x ) } \ \gamma ( \drm x)  \Bigr) , \]
where $\gamma$ is a Gaussian measure. Let us describe it briefly. 
Let $(\Omega,\mathcal A,\prob)$
be a probability space, let $(\mathcal F_t )_{t\in [0,T]}$
be a filtration and let 
\[
(W_t)_{t \in [0,T]}
\]
be a Brownian motion taking values in $\R^n$ (we fix a finite time horizon $T$).
Assuming that the covariance matrix $A$ of $W$ (i.e. the covariance
matrix of the random vector $W_1$) has full rank,
we let $\mathbb H$ be the associated Cameron-Martin space;
namely the Hilbert space of absolutely continuous paths $u\colon [0,T] \to \R^n$
starting from $0$, equipped with the norm
\[
\norm{u}_{\mathbb H} = \bigl(  \int_0^{T} \sca{ A^{-1} \dot u_s ,  \dot u_s } \, \drm s  \bigr)^{1/2}.
\]
In the sequel we call \emph{drift} any adapted
process $U$ which belongs to $\mathbb H$ almost surely.
The following formula is due to Bou\'e and Dupuis~\cite{boue-dupuis}
(see also \cite{borell,lehec}).
\begin{prop}
\label{borell}
Let $g \colon \R^n \to \R$ be measurable and bounded 
from below, then
\[
\log \Bigl( \E \e^{ g ( W_T ) } \Bigr) 
= \sup \Bigl[ \E \Bigl( g ( W_T + U_T ) - \frac{1}{2} \norm{U}^2_{\mathbb H} \Bigr) \Bigr] 
\]
where the supremum is taken over all drifts $U$.
\end{prop}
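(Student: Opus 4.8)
The plan is to prove the Boué–Dupuis formula by establishing the two inequalities separately, using the Girsanov theorem together with elementary properties of relative entropy. The upper bound ($\leq$) is the substantive half and is most transparent via entropy duality: for any probability measure $\prob'$ absolutely continuous with respect to $\prob$ one has the Gibbs variational principle
\[
\log \E \e^{g(W_T)} \geq \E'\bigl( g(W_T) \bigr) - H(\prob' \mid \prob),
\]
with equality when $\drm \prob'/\drm\prob$ is proportional to $\e^{g(W_T)}$. Dually,
\[
\log \E \e^{g(W_T)} = \sup_{\prob'} \Bigl[ \E'\bigl(g(W_T)\bigr) - H(\prob'\mid\prob) \Bigr],
\]
the supremum running over all $\prob' \ll \prob$ (boundedness of $g$ from below, plus a truncation/monotone convergence argument if $g$ is unbounded above, guarantees everything is well defined).

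Next I would parametrize the competitors $\prob'$ by drifts. Given a drift $U$, Girsanov's theorem says that under the measure $\prob_U$ with density
\[
\frac{\drm\prob_U}{\drm \prob} = \exp\Bigl( \int_0^T \sca{A^{-1}\dot U_s, \drm W_s} - \frac12 \int_0^T \sca{A^{-1}\dot U_s, \dot U_s}\,\drm s \Bigr),
\]
the process $W_t - U_t$ is a Brownian motion with covariance $A$. (One should first assume $U$ bounded so that Novikov's condition holds and $\prob_U$ is genuinely a probability measure; the general case follows by approximation.) Computing the relative entropy along this change of measure gives the clean identity
\[
H(\prob_U \mid \prob) = \tfrac12\, \E \norm{U}_{\mathbb H}^2,
\]
since the stochastic integral has zero expectation under $\prob_U$. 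Substituting into the Gibbs principle and writing $\E'(g(W_T)) = \E( g(W_T + U_T) )$ under the law of the shifted Brownian motion yields
\[
\log \E\e^{g(W_T)} \geq \E\Bigl( g(W_T + U_T) - \tfrac12 \norm{U}_{\mathbb H}^2 \Bigr),
\]
which is the inequality $\geq$ in the proposition, valid for every drift.

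For the reverse inequality I would exhibit a near-optimal drift. The natural candidate is the one that turns $\prob_U$ into the optimal tilt $\drm\prob^\ast/\drm\prob \propto \e^{g(W_T)}$: by the martingale representation theorem applied to the martingale $M_t = \E\bigl(\e^{g(W_T)} \mid \mathcal F_t\bigr)$, write $M_t = M_0 + \int_0^t \sca{\rho_s, \drm W_s}$ and set $\dot U_s = A\rho_s / M_s$, so that $\drm\prob_U/\drm\prob = M_T/M_0$. One then checks that $\prob_U = \prob^\ast$, that $\E\norm{U}_{\mathbb H}^2 < \infty$, and that the chain of identities above is saturated, giving $\log\E\e^{g(W_T)} \leq \sup_U \E( g(W_T+U_T) - \tfrac12\norm{U}_{\mathbb H}^2 )$. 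I expect the main obstacle to be the integrability bookkeeping: justifying that $U$ so defined is an honest drift (i.e. lies in $\mathbb H$ a.s. with finite expected squared norm), handling the case where $g$ is unbounded above, and ensuring $M_s$ stays bounded away from $0$ — all of which are dealt with by first proving the formula for bounded $g$ and then removing the truncation by monotone convergence, using the lower bound already established to control the limit from below.
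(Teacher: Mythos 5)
The paper gives no proof of Proposition~\ref{borell}: it is quoted from Bou\'e--Dupuis, with \cite{borell} and \cite{lehec} cited as alternative sources, so there is no in-paper argument to measure yours against. Your outline is essentially the modern proof from those references (it is closest to the one in \cite{lehec}): Donsker--Varadhan/Gibbs duality for $\log\E\e^{g(W_T)}$, Girsanov to identify tilted measures with drifts and to compute $H(\prob_U\mid\prob)=\tfrac12\E_{\prob_U}\norm{U}^2_{\mathbb H}$, and the martingale representation of $M_t=\E(\e^{g(W_T)}\mid\mathcal F_t)$ with $\dot U_s=A\rho_s/M_s$ to exhibit the (near-)optimal drift; the lower bound $\inf g>-\infty$ is exactly what keeps $M_s$ away from $0$, and the truncation-plus-monotone-convergence reduction to bounded $g$ is the right way to handle integrability. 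All the key ideas are present and correct.

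One place where the bookkeeping as written needs care is the ``$\geq$'' direction. You fix an arbitrary drift $U$ on $(\Omega,\mathcal F_t,\prob)$ and then define $\prob_U$ by Girsanov, but the quantity you need to bound is $\E_\prob\bigl(g(W_T+U_T)-\tfrac12\norm{U}^2_{\mathbb H}\bigr)$, with expectations under $\prob$, whereas the Gibbs inequality applied to $\prob_U$ produces $\E_{\prob_U}(g(W_T))-\tfrac12\E_{\prob_U}\norm{U}^2_{\mathbb H}$; the joint law of $(W-U,U)$ under $\prob_U$ is not automatically that of $(W,U)$ under $\prob$, so the identification ``$\E'(g(W_T))=\E(g(W_T+U_T))$'' is not a tautology. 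The clean fix is to run the entropy inequality the other way: let $Q$ be the law of the path $W+U$ under $\prob$ and $P$ Wiener measure with covariance $A$; then the Gibbs inequality gives $\E_\prob g(W_T+U_T)\le \log\E\e^{g(W_T)}+H(Q\mid P)$, and Girsanov plus the data-processing inequality yield $H(Q\mid P)\le\tfrac12\E_\prob\norm{U}^2_{\mathbb H}$ (with the usual localization to dispense with Novikov; if $\E\norm{U}^2_{\mathbb H}=+\infty$ there is nothing to prove). With that adjustment the argument is complete.
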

In~\cite{borell}, Borell rediscovers this formula
and shows that it yields the Pr\'ekopa-Leindler inequality
(a reversed form of H\"older's inequality) very easily. 
Later on Cordero and Maurey noticed that under the frame condition,
both the direct and reversed Brascamp-Lieb inequalities could be recovered this way
(this was not published but is explained in~\cite{lehec}). 
The purpose of this article is, following Borell, Cordero and Maurey,
to show that the Brascamp-Lieb and Barthe theorems in full generality 
are direct consequences of Proposition~\ref{borell}. 
\section{The direct inequality}
%
%
%
Replace $f_i$ by $x\mapsto f_i (  x / \lambda)$
in inequality~\eqref{BL}. The left-hand side of the inequality
is multiplied by $\lambda^n$ and the right-hand side by $\lambda^{\sum_{i=1}^m c_i n_i}$.
Therefore, a necessary condition for $C$ to be finite is
\[
\sum_{i=1}^m c_i n_i = n .
\]
This homogeneity condition will be assumed throughout the rest of the article. 
\begin{thm}
\label{bl-thm}
Assume that there exists a matrix $A$
satisfying
\begin{equation}
\label{john}
A^{-1} = \sum_{i=1}^m c_i B_i^* ( B_i A B_i^* )^{-1} B_i . 
\end{equation}
Then the Brascamp-Lieb constant is
\[
C = \Bigl( \frac { \det ( A )   }{ \prod_{i=1}^m \det ( B_i A B_i^* )^{c_i} }   \Bigr)^{1/2} ,
\]
and there is equality in~\eqref{BL} for the following Gaussian functions
\begin{equation}
\label{blequality}
f_i \colon  x\in \R^{n_i} \mapsto \e^{ - \sca{ (B_i AB_i^*)^{-1} x , x} /2  }  , \quad  i\leq m.
\end{equation}
\end{thm}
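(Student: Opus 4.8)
The plan is to apply Proposition~\ref{borell} twice --- once to the left-hand side of~\eqref{BL} and once to each factor on the right --- and to compare the two variational problems. First I would reduce to a probabilistic statement. Fix a large time horizon $T$ and let $(W_t)$ be the Brownian motion on $\R^n$ whose covariance matrix is exactly the matrix $A$ from~\eqref{john}, so that $W_T$ has law $\gamma_A$, the centred Gaussian with covariance $A$ (after rescaling we may take $T=1$). By a standard approximation argument it suffices to prove~\eqref{BL} for bounded continuous $f_i$ bounded away from $0$; writing $g = \sum_i c_i \log(f_i\circ B_i)$, inequality~\eqref{BL} becomes, after multiplying and dividing by the Gaussian densities and using the homogeneity $\sum c_i n_i = n$, an inequality of the shape
\[
\log\Bigl(\E\,\e^{g(W_1) + \frac12\sca{A^{-1}W_1,W_1}}\Bigr)\ \le\ \log C' + \sum_{i=1}^m c_i\,\log\Bigl(\E\,\e^{h_i(B_iW_1)+\frac12\sca{(B_iAB_i^*)^{-1}B_iW_1,B_iW_1}}\Bigr),
\]
where $h_i=\log f_i$ and $C'$ is the claimed constant; the quadratic terms are chosen so that $B_iW_1$ has covariance $B_iAB_i^*$ and the exponential-of-quadratic weights reproduce the Lebesgue integrals of the $f_i$. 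The key algebraic identity that makes the quadratic pieces match up across the two sides is precisely the John-type condition~\eqref{john}: $\sum_i c_i B_i^*(B_iAB_i^*)^{-1}B_i = A^{-1}$. A short determinant computation shows that the multiplicative constant coming from the Gaussian normalisations is exactly $C' = (\det A/\prod_i\det(B_iAB_i^*)^{c_i})^{1/2}$.

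Next I would estimate the left-hand side from above using Proposition~\ref{borell}: for any drift $U$,
\[
\log\Bigl(\E\,\e^{g(W_1+U_1)+\frac12\sca{A^{-1}(W_1+U_1),W_1+U_1}}\Bigr)\ \text{(with the weight absorbed into $g$)}
\]
equals the supremum over drifts $U$ of $\E\bigl(g(W_1+U_1)+\tfrac12\sca{A^{-1}(\cdot)}{\cdot}-\tfrac12\norm{U}_{\mathbb H}^2\bigr)$. For the right-hand side, I apply Proposition~\ref{borell} on the smaller space $\R^{n_i}$ to the process $B_iW$, which is a Brownian motion on $\R^{n_i}$ with covariance $B_iAB_i^*$: each factor becomes a supremum over drifts $V^{(i)}$ of $\E\bigl(h_i(B_iW_1+V^{(i)}_1)+\cdots-\tfrac12\norm{V^{(i)}}^2\bigr)$. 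The crucial trick --- this is Borell's idea for Prékopa--Leindler --- is that to bound LHS $\le$ RHS it suffices to produce, for the optimal drift $U$ on the left, drifts $V^{(i)}$ on the right reproducing the same value, or conversely to feed a single cleverly chosen drift into the left-hand variational problem. Concretely I would take $V^{(i)}=B_iU$ for a drift $U$ on $\R^n$; then $\sum_i c_i\norm{B_iU}_{\mathbb H_i}^2 = \int_0^1\sum_i c_i\sca{(B_iAB_i^*)^{-1}B_i\dot U_s}{B_i\dot U_s}\,\drm s = \norm{U}_{\mathbb H}^2$ by~\eqref{john}, so the cost terms match exactly, and the linear constraint~\eqref{RBL-hyp}-type bookkeeping on the $g$ versus $h_i$ terms collapses because $g(W_1+U_1)=\sum_i c_i h_i(B_i(W_1+U_1)) = \sum_i c_i h_i(B_iW_1 + V^{(i)}_1)$. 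This gives one direction; for the reverse inequality (to conclude $C$ is exactly $C'$, not just $\le C'$) I would check that the Gaussian functions~\eqref{blequality} produce equality, which is a direct Gaussian integral computation again using~\eqref{john} and the determinant identity.

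The main obstacle, I expect, is the matching of the two variational problems in both directions. One direction --- obtaining LHS $\le$ RHS --- is essentially free once the cost identity $\sum_i c_i\norm{B_iU}^2 = \norm{U}^2$ is in hand and one restricts the right-hand suprema to drifts of the special form $B_iU$; but to get the optimal constant one must go the other way, starting from near-optimal drifts $V^{(i)}$ for the right-hand problems and synthesising from them a single drift $U$ on $\R^n$ with $B_iU \approx V^{(i)}$ and controlled $\mathbb H$-norm. Here the condition that each $B_i$ is onto, together with~\eqref{john}, is what lets one write $U = \sum_i c_i B_i^*(B_iAB_i^*)^{-1}V^{(i)}$ and verify $B_jU = V^{(j)}$ would fail in general --- so instead the right approach is to keep the $V^{(i)}$ essentially arbitrary on the right and show directly, via~\eqref{john} and Cauchy--Schwarz / Jensen, that the right-hand functional is dominated; adaptedness of the constructed drift must be checked, but this is automatic since $U$ is a fixed linear image of adapted processes. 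The Gaussian equality case~\eqref{blequality} serves as the consistency check that no slack was introduced. I would also remark that the extremal functions are unique up to the obvious affine scalings only under an irreducibility hypothesis, but this is not needed for the statement as given.
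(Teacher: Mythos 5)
Your plan is, at its core, the paper's proof: a Brownian motion with covariance $A$, the identity $\norm{u}_{\mathbb H}^2=\sum_i c_i\norm{B_iu}_{\mathbb H_i}^2$ extracted from~\eqref{john}, the observation that feeding the near-optimal drift $U$ of the left-hand variational problem into each factor problem as $B_iU$ yields $\log\bigl(\E\,\e^{g(W_T)}\bigr)\le\sum_i c_i\log\bigl(\E\,\e^{g_i(B_iW_T)}\bigr)$, and the explicit Gaussian computation for sharpness. Two remarks. First, your route from Gaussian expectations to Lebesgue integrals --- absorbing the inverse Gaussian density $\e^{\sca{A^{-1}x,x}/2}$ into the exponent at a fixed time --- clashes with the regularisation needed to invoke Proposition~\ref{borell}: a continuous $f_i$ bounded away from $0$ on all of $\R^{n_i}$ is not integrable, so the reduction as you state it is vacuous, and if instead you set $g_i=\log(f_i+\delta)$ the reweighted expectation $\E\bigl[(f_i(B_iW_1)+\delta)\,\e^{\sca{(B_iAB_i^*)^{-1}B_iW_1,\,B_iW_1}/2}\bigr]$ is infinite for every $\delta>0$. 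The paper sidesteps this by keeping the Gaussian reference measure throughout (so the $+\delta$ contributes only $\delta$ to each expectation) and letting the time horizon $T\to\infty$ only at the very end, where the normalised Gaussian densities flatten to Lebesgue measure; you should either adopt that, or find a regularisation compatible with the quadratic reweighting. Second, the worry in your last paragraph about ``going the other way'' --- synthesising a single drift on $\R^n$ from near-optimal drifts $V^{(i)}$ of the factor problems --- is unnecessary here: $C\le C'$ comes from the one-directional drift restriction, and $C\ge C'$ from the explicit Gaussian equality case~\eqref{blequality}, exactly as you also propose, so no converse variational bound is needed. (That synthesis, via $\sum_i c_iB_i^*(B_iAB_i^*)^{-1}V^{(i)}$ together with Lemma~\ref{hahn-banach}, is precisely what the paper deploys for the \emph{reversed} inequality, not for this one.)
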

\begin{rem}
If the frame condition~\eqref{frame} holds then 
$A = \mathrm{id}_{\R^n}$ satisfies~\eqref{john}
and the Brascamp-Lieb constant is $1$. 
\end{rem}
\begin{proof}
Because of~\eqref{john}, if the functions $f_i$ are defined by~\eqref{blequality}
then
\[
\prod_{i=1}^m \bigl( f_i (B_i x) \bigr)^{c_i}  = \e^{-\sca{A^{-1} x, x} / 2 } .
\]
The equality case follows easily (recall the homogeneity condition $\sum c_i n_i =n$).
\\
Let us prove the inequality. Let 
$f_1,\dotsc,f_m$ be non-negative integrable functions on $\R^{n_1},\dotsc,\R^{n_m}$,
respectively and let 
\[
f\colon x\in \R^n \mapsto \prod_{i=1}^m f_i (B_i x)^{c_i} . 
\]
Fix $\delta >0$, let $g_i = \log ( f_i +\delta)$ for every $i\leq m$ and let
\[
g (x) = \sum_{i=1}^m  c_i g_i ( B_i x ) .
\] 
The functions $(g_i)_{i\leq m}, g$ are bounded from below. 
Fix a time horizon $T$, let $(W_t)_{t\geq T}$ be a Brownian
motion on $\R^n$, starting from $0$ and having covariance $A$; and
let  $\mathbb H$ be the associated Cameron-Martin space. 
By Proposition~\ref{borell}, given $\epsilon>0$, there exists a drift $U$ such that
\begin{equation}
\label{BLetape1}
\begin{split}
\log \Bigl( \E \e^{ g ( W_T ) } \Bigr) 
& \leq \E \Bigl(  g ( W_T + U_T ) - \frac{1}{2} \norm{ U }^2_{\mathbb H} \Bigr) + \epsilon \\
& = \sum_{i=1}^m c_i \E   g_i ( B_i W_T + B_i U_T )  
  - \frac{1}{2} \E  \norm{U}_{\mathbb H}^2 + \epsilon   .
\end{split}
\end{equation}
The process $B_i W$ is a Brownian motion on $\R^{n_i}$ with covariance 
$B_i A B_i^*$. Set $A_i = B_i A B_i^*$ and let $\mathbb H_i$ be the Cameron-Martin space associated to $B_i W$. 
Equality~\eqref{john} gives
\[
\sca{ A^{-1} x , x  } = \sum_{i=1}^m c_i \sca{ A_i^{-1}  B_i x , B_i x } 
\]
for every $x\in \R^n$. This implies that 
\[
\norm{ u }_{\mathbb H}^2 =  \sum_{i=1}^{m} c_i  \norm{B_ i u}_{\mathbb H_i}^2 
\]
for every absolutely continuous path $u\colon [0,T] \to \R^n$.
So that~\eqref{BLetape1} becomes
\[
\log \Bigl( \E \e^{ g ( W_T ) } \Bigr) 
\leq \sum_{i=1}^m c_i \E \Bigl(   g_i ( B_i W_T + B_i U_T )  
  - \frac{1}{2} \norm{ B_i U }_{\mathbb H_i}^2 \Bigr)
  + \epsilon .
 \]
By Proposition~\ref{borell} again we have 
\[
\E \Bigl(   g_i ( B_i W_T + B_i U_T )  - \frac{1}{2} \norm{ B_i U }_{\mathbb H_i}^2 \Bigr)  
\leq 
\log \Bigl( \E \e^{ g_i( B_i W_T ) } \Bigr)
\]
for every $i\leq m$. We obtain (dropping $\epsilon$ which is arbitrary) 
\begin{equation}
\label{BLetape2}
\log \Bigl( \E \e^{ g ( W_T ) } \Bigr) 
\leq \sum_{i=1}^m c_i \log \Bigl( \E \e^{ g_i( B_i W_T ) } \Bigr) .
\end{equation}
Recall that $f \leq \e^g$ and observe that 
\[
\prod_{i=1}^m \Bigl( \E ( \e^{g_i} ( B_i W_T )  \Bigr)^{c_i} 
\leq \prod_{i=1}^m \Bigl( \E f_i ( B_i W_T )  \Bigr)^{c_i}  + O(  \delta^c ) ,
\]
for some positive constant $c$. 
Inequality~\eqref{BLetape2} becomes (dropping the $O(\delta^c)$ term)
\begin{equation}
\label{BLetape3}
\E  f ( W_T )  
\leq \prod_{i=1}^m \Bigl( \E f_i ( B_i W_T )  \Bigr)^{c_i} .
\end{equation}
Since $W_T$ is a centered Gaussian vector with covariance $T A$
\[
\E f ( W_T )   =  \frac 1 {  (2\pi T)^{n/2} \det (A)^{1/2} }  
\int_{\R^n} f  (x) \e^{ - \sca{ A^{-1} x , x } / 2 T  } \, \drm x ,
\]
and there a similar equality for $\E  f_i( B_i W_T )$.
Then it is easy to see that letting $T$ tend to $+\infty$ in inequality~\eqref{BLetape3} 
yields the result (recall that $\sum c_i n_i =n$). 
 \end{proof}
\begin{ex}[Optimal constant in Young's inequality]
Young's convolution inequality asserts that if $p,q,r \geq 1$
and are linked by the equation 
\begin{equation}
\label{contrainteyoung}
\frac 1 p + \frac 1 q = 1+ \frac 1 r  ,
\end{equation}
then
\[
\norm{ F * G }_r \leq  \norm{ F }_p \norm{ G }_q ,
\]
for all $F \in L_p$ and $G \in L_q$. 
When either $p$, $q$ or $r$ equals $1$ or $+\infty$
the inequality is a consequence of H\"older's inequality
and is easily seen to be sharp.
On the other hand when $p,q,r$ belong to
the open interval $(1,+\infty)$
the best constant $C$ in the inequality
\[
\norm{ F * G }_r \leq  C \norm{ F }_p \norm{ G }_q ,
\]
is actually smaller than $1$. Let us compute it using the previous theorem. 
Observe that by duality $C$ is the best constant in the inequality
\begin{equation}
\label{blyoung}
\int_{\R^2 } f^{c_1} (x+y) g^{c_2} (y) h^{c_3} (x) \, \drm x \drm y \leq 
C
\Bigl( \int_\R f \Bigr)^{c_1}
\Bigl( \int_\R g \Bigr)^{c_2}
\Bigl( \int_\R h \Bigr)^{c_3} ,
\end{equation}
where 
\[
c_1 = \frac 1 p , \ c_2 = \frac 1 q , \ c_3 = 1 - \frac 1 r .
\] 
In other words $C$ is the Brascamp-Lieb constant 
in $\R^2$ associated to the data
\[
( c_1 , B_1 ) , ( c_2  ,  B_2  ) , ( c_3 , B_3 ) ,
\]
where $B_1 = (1,1)$, $B_2 = (0,1)$ and $B_3 = (1,0)$. 
According to the previous result, we have to find a positive definite matrix $A$
satisfying
\[
A^{-1} = \sum_{i=1}^3  c_i B_i^* ( B_i A B_i^* )^{-1} B_i . 
\] 
Letting 
$
A = 
\begin{pmatrix} 
x & z \\
z & y
\end{pmatrix}
$, this equation turns out to be equivalent to
%
\[
\begin{split}
(1- c_2 ) xy + yz  + c_2 z^2 & = 0 \\
(1- c_3 ) xy + xz + c_3 z^2 & = 0 \\
c_1 + c_2 + c_3 & = 2  .
\end{split}
\]
The third equation is just the Young constraint~\eqref{contrainteyoung}.
The first two equations admit two families of solutions: either $(x,y,z)$
is a multiple of $(1,1,-1)$ or $(x,y,z)$ is a multiple of
 \[
  \bigl( c_3 ( 1- c_3 ) , c_2  ( 1 - c_2 ) , - (1- c_2 ) (1 - c_3)  \bigr) .
 \]
The constraint $xy - z^2 >0$ rules out the first solution.
The second solution is fine since $c_1,c_2$ and $c_3$ are assumed to
belong to the open interval $(0,1)$. 
By Theorem~\ref{bl-thm}, the best constant in~\eqref{blyoung} is
\[
C = \Bigl( \frac{ \det ( A) }{ \prod_{i=1}^3 \det ( B_i A B_i^* )^{c_i} } \Bigr)^{ 1/2 } 
=
\Bigl(
  \frac{   (1- c_1 )^{1- c_1} (1- c_2 )^{1- c_2 } (1- c_3 )^{1-c_3} }
  { c_1^{c_1}  c_2^{c_2}c_3^{c_3}} \Bigr)^{1/2}  .
\]
In terms of $p,q,r$ we have
\[
C = \Bigl( \frac{  p^{1/p} \  q^{1/q}  \ {r'}^{1/  r' } }{  {p'}^ {1/p'}  \ {q'}^{1/q'} \  r^{1/r}  }  \Bigr)^{ 1/2 }
\]
where $p',q',r'$ are the conjugate exponents of $p,q,r$, respectively.
This is indeed the best constant in Young's inequality, 
first obtained by Beckner~\cite{beckner}.
\end{ex}

\section{The reversed inequality}
\begin{thm}
\label{rbl-thm}
Again, assume that there is a matrix $A$ satisfying~\eqref{john}. 
Then the reversed Brascamp-Lieb constant is
\[
 C_r =  \Bigl( \frac { \det ( A )   }{ \prod_{i=1}^m \det ( B_i A B_i^* )^{c_i} } \Bigr)^{1/2}  .
 \]
There is equality in~\eqref{RBL} for the following Gaussian functions
\[
\begin{split}
f_i & \colon  x\in \R^{n_i}  \mapsto \e^{ - \sca{ B_i A B_i^*  x , x} /2  }  , \quad  i\leq m. \\
f & \colon  x\in \R^{n}  \mapsto \e^{ - \sca{ A x , x} /2  }  .  \\
\end{split}
\]
\end{thm}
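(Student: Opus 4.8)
The plan is to mimic the proof of Theorem~\ref{bl-thm}, but running the Boué--Dupuis variational formula ``in the opposite direction'': instead of bounding the log-Laplace transform of a sum from above by a sum of log-Laplace transforms, we will produce an explicit drift and use the variational characterization to bound the quantity associated to $f$ from below. First I would reduce to the case where all functions are bounded and bounded away from $0$ by a routine truncation/regularization argument (replace $f_i$ by $\min(f_i,M)+\delta$ and adjust $f$ accordingly, using that the hypothesis~\eqref{RBL-hyp} is preserved up to an $O(\delta^c)+o(1)$ error), so that $g_i=\log f_i$ and $g=\log f$ are bounded and we may apply Proposition~\ref{borell} freely.

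Next I would set up the probabilistic picture exactly as before: fix a time horizon $T$, let $W$ be a Brownian motion on $\R^n$ with covariance $A$, let $\mathbb H$ be its Cameron--Martin space, and for each $i$ let $W^{(i)}=B_iW$, which is a Brownian motion with covariance $A_i=B_iAB_i^*$ and Cameron--Martin space $\mathbb H_i$; as in Theorem~\ref{bl-thm}, condition~\eqref{john} gives the crucial identity $\norm{u}_{\mathbb H}^2=\sum_i c_i\norm{B_iu}_{\mathbb H_i}^2$. For each $i$, apply Proposition~\ref{borell} to $g_i$ and $W^{(i)}$ to get, for a given $\epsilon>0$, a drift $U^{(i)}$ on $\R^{n_i}$ with
\[
\log\Bigl(\E\,\e^{g_i(W^{(i)}_T)}\Bigr)\leq \E\Bigl(g_i\bigl(W^{(i)}_T+U^{(i)}_T\bigr)-\tfrac12\norm{U^{(i)}}_{\mathbb H_i}^2\Bigr)+\epsilon.
\]
The heart of the argument is to assemble these individual drifts into a single drift on $\R^n$. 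The natural guess, dictated by~\eqref{RBL-hyp}, is the drift $U$ whose value at time $t$ is $\sum_i c_i B_i^* U^{(i)}_t$ --- but one must check this is adapted (it is, since each $U^{(i)}$ is adapted to the filtration of $W^{(i)}$, hence to that of $W$) and, more importantly, that $B_iU$ relates to $U^{(i)}$ in a way that makes the norms add up correctly. Here I expect to need that the $U^{(i)}$ are not taken arbitrarily but chosen as functions of $W$ in a compatible way; concretely, one should run the construction so that $W_T+U_T=\sum_i c_iB_i^*(W^{(i)}_T+U^{(i)}_T)$, and use~\eqref{RBL-hyp} in the form $g\bigl(\sum_i c_iB_i^*y_i\bigr)\geq\sum_i c_ig_i(y_i)$ to bound $\E\,g(W_T+U_T)$ from below by $\sum_i c_i\E\,g_i(W^{(i)}_T+U^{(i)}_T)$.

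Combining these ingredients: by Proposition~\ref{borell} applied to $g$ and $W$,
\[
\log\Bigl(\E\,\e^{g(W_T)}\Bigr)\geq \E\Bigl(g(W_T+U_T)-\tfrac12\norm{U}_{\mathbb H}^2\Bigr)\geq \sum_{i=1}^m c_i\Bigl(\E\,g_i\bigl(W^{(i)}_T+U^{(i)}_T\bigr)-\tfrac12\norm{U^{(i)}}_{\mathbb H_i}^2\Bigr),
\]
and the right-hand side is at least $\sum_i c_i\log\bigl(\E\,\e^{g_i(W^{(i)}_T)}\bigr)-\epsilon\sum_i c_i$ by the choice of the $U^{(i)}$. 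Letting $\epsilon\to0$ gives $\E\,f(W_T)\geq\prod_i\bigl(\E\,f_i(B_iW_T)\bigr)^{c_i}$, and then, exactly as in the proof of Theorem~\ref{bl-thm}, writing out the Gaussian densities of $W_T$ and $B_iW_T$, removing the regularization, and letting $T\to+\infty$ (using $\sum_ic_in_i=n$) yields~\eqref{RBL} with the stated constant. The equality case is the same computation as in Theorem~\ref{bl-thm} read in reverse: for the Gaussian $f_i,f$ in the statement, hypothesis~\eqref{RBL-hyp} holds with equality pointwise by~\eqref{john}, so both sides of~\eqref{RBL} are explicit Gaussian integrals whose ratio is precisely $C_r$. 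The main obstacle, as flagged above, is the drift-assembly step: one must be careful that choosing the $U^{(i)}$ depending on the full Brownian motion $W$ (rather than only on $W^{(i)}$) still yields valid drifts in Proposition~\ref{borell} for each $W^{(i)}$ --- this is where adaptedness and the filtration structure have to be handled with some care, and it may be cleanest to first prove the inequality for a dense class of drifts (e.g. simple or deterministic-after-discretization) where the bookkeeping is transparent, then pass to the limit.
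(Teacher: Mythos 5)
Your overall strategy --- run Proposition~\ref{borell} ``backwards'', taking near-optimal drifts $U^{(i)}$ for each $g_i$ and assembling them into the drift $\sum_i c_i B_i^* U^{(i)}$ for $g$ --- is exactly the paper's, and the adaptedness issue you flag at the end is actually a non-issue: all the $U^{(i)}$ are adapted to the single filtration generated by $W$, and the ``easy'' direction of Proposition~\ref{borell} accepts any drift adapted to that filtration. The genuine gap is in your choice of Gaussian structures. With $W^{(i)}=B_iW$ (covariance $A_i=B_iAB_i^*$) and target $W$ (covariance $A$), the construction breaks in two places. First, $\sum_i c_iB_i^*W^{(i)}_T=\bigl(\sum_i c_iB_i^*B_i\bigr)W_T\neq W_T$ unless the frame condition holds, so you cannot arrange $W_T+U_T=\sum_i c_iB_i^*(W^{(i)}_T+U^{(i)}_T)$; the discrepancy $(\sum_i c_iB_i^*B_i-\mathrm{id})W_T$ is a multiple of the Brownian path itself and is not a finite-energy drift. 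Second, the norm identity $\norm{u}_{\mathbb H}^2=\sum_i c_i\norm{B_iu}_{\mathbb H_i}^2$ is useless here because the $U^{(i)}$ coming from $m$ separate applications of Proposition~\ref{borell} are \emph{not} of the form $B_iU$ for a common $U$; what you actually need is $\bigl\lVert\sum_i c_iB_i^*u_i\bigr\rVert_{\mathbb H}^2\leq\sum_i c_i\norm{u_i}_{\mathbb H_i}^2$ for arbitrary paths $u_i$, and with your covariances this amounts to $\sum_i c_iB_i^*(B_iAB_i^*)B_i\leq A$, which is false in general (it already fails for the Young datum of the example in Section~2). Consistently with this, your final density computation with covariances $A$ and $A_i$ outputs the reciprocal $\bigl(\prod_i\det(A_i)^{c_i}/\det(A)\bigr)^{1/2}$ of the stated constant.

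The paper's fix is a change of Gaussian structure: apply Proposition~\ref{borell} to $g_i$ along the Brownian motions $A_i^{-1}B_iW$, which have covariance $A_i^{-1}$, and to $g$ along $A^{-1}W$, which has covariance $A^{-1}$. Then~\eqref{john} gives exactly $\sum_i c_iB_i^*(A_i^{-1}B_iW_T)=A^{-1}W_T$, so the assembled drift $\sum_i c_iB_i^*U_i$ lands on the right Brownian motion, and Lemma~\ref{hahn-banach} (the Cauchy--Schwarz/infimum lemma, which you do not use) gives precisely $\bigl\langle A\bigl(\sum_i c_iB_i^*x_i\bigr),\sum_i c_iB_i^*x_i\bigr\rangle\leq\sum_i c_i\sca{A_ix_i,x_i}$ for \emph{arbitrary} $x_i$, which is the needed superadditivity of the Cameron--Martin norms for these inverted covariances. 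The rest of your outline (truncation, the use of~\eqref{RBL-hyp} to pass from $\sum_i c_i g_i$ to $g$, the limit $T\to\infty$, and the equality case) then goes through as you describe.
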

\begin{rem}
Observe that under condition~\eqref{john} the Brascamp-Lieb constant
and the reversed constant are the same, but the extremizers differ. 
\end{rem}
We shall use the following elementary lemma.
\begin{lem}
\label{hahn-banach}
Let $A_1 , \dotsc , A_m$ be positive definite matrices on 
$\R^{n_1}, \dotsc , \R^{n_m}$, 
respectively and let 
\[
A = \Bigl( \sum_{i=1}^m c_i B_i^* {A_i^{-1}} B_i \Bigr)^{-1} .
\]
Then for all $x\in \R^n$
\[
 \sca{ A x , x }  =
\inf  \Big\{ \sum_{i=1}^m c_i \sca{ A_i x_i  , x_i }  , \ \sum_{i=1}^m  c_i B_i^* x_i = x  \Bigr\} .
\]
\end{lem}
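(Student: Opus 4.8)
The plan is to recognize this as a convex-duality statement: the right-hand side is an infimum of the quadratic form $x \mapsto \sum_i c_i \sca{A_i x_i, x_i}$ over the affine constraint $\sum_i c_i B_i^* x_i = x$, and we must show the value of this constrained minimization equals $\sca{Ax,x}$ with $A = (\sum_i c_i B_i^* A_i^{-1} B_i)^{-1}$. First I would handle the trivial inequality: for any admissible choice $(x_i)$ we can verify $\sca{Ax,x} \le \sum_i c_i \sca{A_i x_i, x_i}$ by exhibiting a single good competitor. Indeed, since the constraint is linear and the objective is a positive definite quadratic, the minimizer is found by Lagrange multipliers: one checks that $x_i := A_i^{-1} B_i A x$ is admissible (because $\sum_i c_i B_i^* A_i^{-1} B_i A x = A^{-1} A x = x$) and plugging it in gives $\sum_i c_i \sca{A_i A_i^{-1} B_i A x, A_i^{-1} B_i A x} = \sum_i c_i \sca{B_i^* A_i^{-1} B_i A x, A x} = \sca{A^{-1} A x, A x} = \sca{A x, x}$, so the infimum is at most $\sca{Ax,x}$ and is attained.

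For the reverse inequality $\sca{Ax,x} \le \sum_i c_i \sca{A_i x_i, x_i}$ for every admissible $(x_i)$, I would use Cauchy–Schwarz in the form adapted to the positive definite matrices $A_i$. Write $x = \sum_i c_i B_i^* x_i$ and pair against an arbitrary $y \in \R^n$:
\[
\sca{x, y} = \sum_{i=1}^m c_i \sca{x_i, B_i y} \le \sum_{i=1}^m c_i \sca{A_i x_i, x_i}^{1/2} \sca{A_i^{-1} B_i y, B_i y}^{1/2}.
\]
Then apply the discrete Cauchy–Schwarz inequality with weights $c_i$ to bound this by $\bigl(\sum_i c_i \sca{A_i x_i, x_i}\bigr)^{1/2} \bigl(\sum_i c_i \sca{A_i^{-1} B_i y, B_i y}\bigr)^{1/2} = \bigl(\sum_i c_i \sca{A_i x_i, x_i}\bigr)^{1/2} \sca{A^{-1} y, y}^{1/2}$. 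Choosing $y = Ax$ gives $\sca{x, Ax} \le \bigl(\sum_i c_i \sca{A_i x_i, x_i}\bigr)^{1/2} \sca{A^{-1} A x, A x}^{1/2} = \bigl(\sum_i c_i \sca{A_i x_i, x_i}\bigr)^{1/2} \sca{x, A x}^{1/2}$, and dividing by $\sca{Ax,x}^{1/2}$ (nonzero when $x \ne 0$; the case $x=0$ being trivial) yields $\sca{Ax,x} \le \sum_i c_i \sca{A_i x_i, x_i}$.

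Combining the two directions gives the claimed identity, with the infimum attained at $x_i = A_i^{-1} B_i A x$. I do not expect a serious obstacle here — the only points requiring care are: verifying $A$ is well-defined and positive definite (which follows since each $c_i B_i^* A_i^{-1} B_i$ is positive semidefinite and their sum is positive definite because the $B_i$ are onto and hence $\sum_i \ker B_i$ intersection is trivial... more precisely $\sum_i c_i \sca{A_i^{-1}B_i x, B_i x} = 0$ forces $B_i x = 0$ for all $i$, and one needs $\bigcap_i \ker B_i = \{0\}$, which is implicit in the datum being a genuine Brascamp–Lieb datum), and making sure the Cauchy–Schwarz chain is applied in the right order. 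An alternative, perhaps cleaner, route is to minimize directly: parametrize the affine constraint set, differentiate the quadratic, and solve — but the Cauchy–Schwarz argument above avoids choosing coordinates and is the shortest.
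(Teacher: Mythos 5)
Your proof is correct and follows essentially the same route as the paper: the infimum is attained at $x_i = A_i^{-1}B_iAx$, and the lower bound comes from pairing the constraint against $Ax$ and applying Cauchy--Schwarz adapted to the $A_i$. The only cosmetic difference is that you introduce a free dual vector $y$ before specializing to $y=Ax$ and split the Cauchy--Schwarz step into a per-term and a discrete application, whereas the paper does it in one combined estimate.
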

\begin{proof}
Let $x_1,\dotsc,x_m$ and let
\begin{equation}
\label{truc}
x = \sum_{i=1}^m c_i B_i^* x_i . 
\end{equation}
Then by the Cauchy-Schwarz inequality 
(recall that the matrices $A_i$ are positive definite)
\[
\begin{split}
\sca{ A x , x } 
& = \sum_{i=1}^m c_i \sca{ A x , B_i^* x_i } = \sum_{i=1}^m c_i \sca{ B_i A x , x_i } \\
& \leq \Bigl( \sum_{i=1}^m c_i  \sca{ A_i^{-1} B_i A x  , B_i A x } \Bigr)^{1/2}
 \Bigl( \sum_{i=1}^m c_i \sca{A_i x_i  , x_i} \Bigr)^{1/2} \\
& =  \sca{Ax , x }^{1/2} \ \Bigl( \sum_{i=1}^m c_i  \sca{ A_i x_i , x_i } \Bigr)^{1/2} .
\end{split}
\]
Besides, given $x\in \R^n$, set $x_i = A_i^{-1} B_i A x$ for all $i\leq m$. 
Then~\eqref{truc} holds and there is equality in the above Cauchy-Schwarz inequality. 
This concludes the proof. 
 \end{proof}
\begin{proof}[Proof of Theorem~\ref{rbl-thm}]
The equality case is a straightforward 
consequence of the hypotethis~\eqref{john} 
and Lemma~\ref{hahn-banach}, details are left to the
reader. 
\\
Let us prove the inequality. There is no loss of generality 
assuming that the functions $f_1,\dotsc,f_m$ are bounded from above
(otherwise replace $f_i$ by $\max( f_i , k)$, let $k$ tend to $+\infty$
and use monotone convergence). 
Fix $\delta >0$ and let $g_i = \log ( f_i +\delta)$ for every $i\leq m$.
By~\eqref{RBL-hyp} and since the functions $f_i$ are bounded from above,
 there exist positive constants $c,C$ such that the function
\[
g \colon x\in\R^n \mapsto \log \bigl(  f(x)  + C  \delta^c \bigr)  ,
\] 
satisfies 
\begin{equation}
\label{hypo-g}
 \sum_{i=1}^m c_i g_i ( x_i ) \leq g \Bigl( \sum_{i=1}^m c_i B_i^* x_i  \Bigr) 
\end{equation}
for every $x_1,\dotsc, x_m$. 
Observe that the functions $(g_i)_{i\leq m} , g$ are bounded from below. 
Let $(W_t)_{t\leq T}$ be a Brownian motion on $\R^n$ having covariance matrix $A$.
Set $A_i =  B_i A B_i^* $, then $A_i^{-1} B_i W$ is a Brownian motion
on $\R^{n_i}$ with covariance matrix
\[
( A_i^{-1} B_i ) A ( A_i^{-1} B_i )^* = A_i^{-1} ( B_i A B_i^* ) A_i^{-1} = A_i^{-1}.
\]
Let $\mathbb H_i$ be the associated Cameron-Martin space.
By Proposition~\ref{borell} there exists a ($\R^{n_i}$-valued) drift $U_i$ such that
\begin{equation}
\label{RBLetape1}
\log \Bigl( \E \e^{ g_i ( A_i^{-1} B_i W_T ) } \Bigr) 
\leq 
\E  
\Bigl(  g_i ( A_i^{-1} B_i W_T +  (U_i)_T ) - \frac 1 2 \norm{ U_i }_{\mathbb H_i}^2 \Bigr)  + \epsilon .
\end{equation}
By~\eqref{hypo-g} and~\eqref{john}
\[
\begin{split}
\sum_{i=1}^m c_i g_i ( A_i^{-1} B_i W_T + (U_i)_T ) &  
\leq g \Bigl( \sum_{i=1}^m c_i B_i^* ( A_i^{-1} B_i  W_T + (U_i)_T ) \Bigr) \\
 & =  g \Bigl( A^{-1} W_T + \sum_{i=1}^m c_i B_i^* (U_i)_T \Bigr) .
\end{split}
\]
The Brownian motion $(A^{-1} W)_{t\leq T}$ has covariance matrix $ A^{-1} A (A^{-1})^* = A^{-1}$. 
Let $\mathbb H$ be the associated Cameron-Martin space.
Lemma~\ref{hahn-banach} shows that
\[
\Bigl\langle A \Bigl( \sum_{i=1}^m c_i B_i^* x_i \Bigr) , \sum_{i=1}^m c_i B_i^* x_i \Bigr\rangle
\leq
\sum_{i=1}^m c_i  \sca{ A_i  x_i  ,  x_i }
\]
for every $x_1,\dotsc,x_m$ in $\R^{n_1},\dotsc,\R^{n_m}$, respectively. 
Therefore
\[
\Bigl\lVert \sum_{i=1}^m c_i B_i^* u_i \Bigr\rVert_{\mathbb H}^2 
\leq \sum_{i=1}^m c_i \norm{u_i}_{\mathbb H_i}^2 .
\]
for every sequence of absolutely continuous paths $( u_i \colon [0,T] \to \R^{n_i} )_{i\leq m}$.
Thus multiplying~\eqref{RBLetape1} by $c_i$ and summing over $i$ yields
\[
\begin{split}
\sum_{i=1}^m c_i &  \log \Bigl( \E \e^{ g_i ( A_i^{-1} B_i W_T ) } \Bigr) \\
& \leq \E \Bigl[ g \bigl(  A^{-1} W_T + \sum_{i=1}^m c_i B_i^* (U_i)_T \bigr)
  - \frac 1 2 \bigl\Vert \sum_{i=1}^m c_i B_i^* U_i \bigr\Vert_{\mathbb H}^2  \Bigr] +  \sum_{i=1}^m c_i  \epsilon .
\end{split}
\]
Hence, using Proposition~\ref{borell} again and dropping $\epsilon$ again,
\begin{equation}
\label{RBLetape2}
\sum_{i=1}^m c_i \log \Bigl( \E \e^{g_i  ( A_i^{-1} B_i W_T ) } \Bigr)^{c_i}
\leq \log \Bigl( \E \e^{ g ( A^{-1} W_T ) } \Bigr) .
\end{equation}
Recall that $f_i \leq \e^{g_i}$ for every $i\leq m$
and that $\e^{g}  = f + C  \delta^c$. Since $\delta$ is arbitrary, 
inequality~\eqref{RBLetape2} becomes
\[
\prod_{i=1}^m \Bigl( \E f_i  ( A^{-1}_i B_i W_T )  \Bigr)^{c_i} 
\leq 
\E f ( A^{-1} W_T )  .
\]
Again, letting $T$ tend to $+\infty$
in this inequality yields the result. 
 \end{proof}
%
%
\section{The Brascamp-Lieb and Barthe theorems}
%
%
%
So far we have seen that both the direct inequality and the reversed version
are saturated by Gaussian functions when there exists a matrix $A$ such that 
\begin{equation}
\label{john1}
A^{-1} = \sum_{i=1}^m c_i B_i^* ( B_i A B_i^* )^{-1} B_i . 
\end{equation}
In this section, we briefly explain why 
this yields the Brascamp-Lieb
and Barthe theorems. 
\\
Applying~\eqref{BL} to Gaussian functions gives
\begin{equation}
\label{BLG}
\prod_{i=1}^m \det ( A_i )^{c_i}   
\leq C^2 \det \bigl( \sum_{i=1}^m c_i B_i^* A_i B_i \bigr) ,
\end{equation}
for every sequence $A_1,\dotsc, A_m$
of positive definite matrices on $\R^{n_1},\dotsc,\R^{n_m}$.
Let $C_g$ be the Gaussian Brascamp-Lieb constant;
namely the best constant in the previous inequality. We 
have $C_g \leq C$ and it turns out that applying~\eqref{RBL}
to Gaussian functions
yields $C_g \leq C_r$ 
(one has to apply~Lemma~\ref{hahn-banach} at some point). 
\\
It is known since the work of 
Carlen and Cordero~\cite{cc}
that there is a dual formulation of~\eqref{BL}
in terms of relative entropy. 
In the same way, there is a dual formulation of~\eqref{BLG}.
For every positive matrix $A$ on $\R^n$, one has
\[
\log \det (A) = \inf_{B>0} \bigl( \tr ( AB ) - n - \log ( \det (B ) ) \bigr) ,
\]
with equality when $B = A^{-1}$. 
Using this and the equality $\sum_{i=1}^m c_i n_i = n$,
it is easily seen that 
$C_g$ is also the best constant such that  the inequality
\begin{equation}
\label{BLGD}
\det ( A ) \leq C_g^2 \prod_{i=1}^m \det ( B_i A B_i^* )^{c_i} 
\end{equation}
holds for every positive definite matrix $A$ on $\R^n$. 
\begin{ex}
Assume that $m=n$, that $c_1 = \dotsb = c_n =1$ and that $B_i (x) = x_i$
for $i\in [n]$. Inequality~\eqref{BLG} trivially holds with constant $1$
(and there is equality for every $A_1,\dotsc,A_n$).
On the other hand~\eqref{BLGD} becomes
\[
 \det ( A ) \leq \prod_{i=1}^n a_{ii} ,
\]
for every positive definite $A$, with equality when $A$ is diagonal. 
This is Hadamard's inequality. 
%
\end{ex}
\begin{lem}
\label{variational}
If $A$ is extremal in~\eqref{BLGD} then $A$ satisfies~\eqref{john1}. 
\end{lem}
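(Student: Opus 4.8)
The plan is to treat extremality as a first-order condition. Fix an extremal $A$ for inequality~\eqref{BLGD}, i.e.\ a maximizer of the functional
\[
\Phi ( A ) = \log \det ( A ) - \sum_{i=1}^m c_i \log \det ( B_i A B_i^* )
\]
over the open convex cone of positive definite matrices on $\R^n$. (We should first remark that a maximizer exists; if one worries about this, note that $C_g$ being finite forces $\Phi$ to be bounded above, and $\Phi$ is invariant under $A \mapsto \lambda A$ by the homogeneity $\sum c_i n_i = n$, so one may normalize, but in any case the lemma only concerns an $A$ that \emph{is} extremal, so existence is not strictly needed.) The strategy is simply to compute the derivative of $\Phi$ at $A$ in an arbitrary symmetric direction $H$ and set it to zero.

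First I would recall the standard formula $\frac{\drm}{\drm t}\big|_{t=0} \log \det ( M + tH ) = \tr ( M^{-1} H )$ for $M$ positive definite. Applying this to the first term gives $\tr ( A^{-1} H )$. For the $i$-th term in the sum, the inner matrix is $B_i ( A + tH ) B_i^* = B_i A B_i^* + t B_i H B_i^*$, so its log-determinant has derivative $\tr \bigl( ( B_i A B_i^* )^{-1} B_i H B_i^* \bigr) = \tr \bigl( B_i^* ( B_i A B_i^* )^{-1} B_i H \bigr)$, using cyclicity of the trace. Hence
\[
\frac{\drm}{\drm t}\Big|_{t=0} \Phi ( A + tH )
= \tr \Bigl( \Bigl[ A^{-1} - \sum_{i=1}^m c_i B_i^* ( B_i A B_i^* )^{-1} B_i \Bigr] H \Bigr) .
\]
Since $A$ is a maximizer and the cone of positive definite matrices is open, this derivative vanishes for every symmetric $H$. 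The bracketed matrix, call it $S$, is symmetric, and $\tr ( S H ) = 0$ for all symmetric $H$ forces $S = 0$ (take $H = S$ to get $\tr ( S^2 ) = 0$). That is precisely equation~\eqref{john1}.

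I do not expect any serious obstacle: the only points requiring a word of care are that the log-determinant is differentiable on the positive definite cone (true, it is smooth there, and $B_i A B_i^*$ stays positive definite for small $t$ since $B_i$ is onto), and that extremality is genuinely an \emph{interior} maximum so the first-order condition applies in all directions rather than just a half-space — this is guaranteed because the domain is the open cone. If one prefers to avoid even mentioning existence of a maximizer, one can phrase the argument purely as: "$A$ extremal" means $\Phi ( A ) \geq \Phi ( A + tH )$ for all small $t$ and all symmetric $H$, which already yields that $t \mapsto \Phi ( A + tH )$ has a critical point at $0$, hence $\tr ( S H ) = 0$, hence $S = 0$. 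That is the whole proof.
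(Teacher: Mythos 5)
Your proof is correct and is exactly the paper's argument: the paper's proof consists of the single instruction ``compute the gradient of $A \mapsto \log\det(A) - \sum_i c_i \log\det(B_i A B_i^*)$,'' and you have carried out that computation in full, correctly obtaining $\tr\bigl([A^{-1} - \sum_i c_i B_i^*(B_iAB_i^*)^{-1}B_i]H\bigr)=0$ for all symmetric $H$ and concluding~\eqref{john1}.
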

\begin{proof}
Just compute the gradient of the map
\[
 A > 0  \mapsto \log \det ( A ) - \sum_{i=1}^m c_i \log \det ( B_i A B_i^* ) . \qedhere
\]
\end{proof}
Therefore, if the constant $C_g$ is finite and if 
there is an extremizer $A$ in~\eqref{BLGD} then $A$
satisfies~\eqref{john1} and together with the results 
of the previous sections we get the Brascamp-Lieb and Barthe
equalities
\begin{equation}
\label{bbl-thm}
C = C_r = C_g . 
\end{equation}
Although it may happen that $C_g <+\infty$ and no Gaussian extremizer exists,
there is a way to bypass this issue. 
For the Brascamp-Lieb theorem,
there is an abstract argument showing that is it is enough
to prove the equality $C = C_g$ when there is
a Gaussian extremizer. This argument relies on:
\begin{enumerate}
\item A criterion for having a Gaussian extremizer, due to Barthe~\cite{barthe}
in the rank $1$ case (namely when the dimensions $n_i$
are all equal to $1$) and Bennett, Carbery, Christ and Tao~\cite{bcct}
in the general case. 
\item A multiplicativity property of $C$ and $C_g$ 
due to Carlen, Lieb and Loss~\cite{cll} in the 
rank $1$ case and BCCT again in general. 
\end{enumerate}
There is no point repeating this argument here, 
and we refer to~\cite{cll,bcct} instead.
This settles the case of the $C=C_g$ equality. 
As for the $C=C_r$ equality,
we observe that
the above argument can
be carried out verbatim
once the mutliplicativity property
of the reversed Brascamp-Lieb constant is established.
This is the purpose of the rest of the article.
\begin{defi}
Given a proper subspace $E$ of $\R^n$ we 
let $B_{i,E}$ be the restriction of $B_{i}$ to $E$ and 
\[
B_{i,E^\perp} \colon x \in E^\perp \mapsto  q_i \circ  B_i x ,
\]
where $q_i$ is the orthogonal projection onto $(B_i E)^\perp$. 
Let $C_{r,E}$ be the reversed Brascamp-Lieb constant on $E$ associated to the datum 
\[
( c_1 , B_{1,E} ) , \dotsc , ( c_m , B_{m,E} )
\]
and $C_{r,E^\perp}$ be the Brascamp-Lieb constant on $E^\perp$ 
associated to the datum 
\[
( c_1 , B_{1 , E^\perp} ) , \dotsc , ( c_m , B_{m , E^\perp} )
\]
\end{defi}
\begin{rem}
It may happen that the restriction of $B_i$ to $E$ is identically $0$.
In the sequel, we take the convention that a Brascamp-Lieb 
datum is allowed to contain maps $B_i$ 
which are identically $0$, but that these are discarded
for the computation of the associated Brascamp-Lieb constants. 
\end{rem}
\begin{prop}
Let $E$ be a proper subspace of $\R^n$,
and assume that $E$ is critical, in the sense that 
\[
\dim ( E ) = \sum_{i=1}^m c_i \dim ( B_i E ) .
\]
Then $C_r = C_{r,E} \times C_{r,E^\perp}$.
\end{prop}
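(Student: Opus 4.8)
The plan is to prove the two inequalities $C_r \le C_{r,E}\times C_{r,E^\perp}$ and $C_r \ge C_{r,E}\times C_{r,E^\perp}$ separately, the former by decomposing a given extremal-type configuration of functions on $\R^n$ into configurations on $E$ and on $E^\perp$, and the latter by building a configuration on $\R^n$ out of configurations on the two subspaces.

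For the upper bound $C_r \le C_{r,E}\, C_{r,E^\perp}$, I would start from non-negative functions $f_1,\dots,f_m,f$ satisfying the reversed hypothesis \eqref{RBL-hyp} on $\R^n$. Writing $x\in\R^n$ as $y+z$ with $y\in E$, $z\in E^\perp$, and similarly decomposing each $x_i\in\R^{n_i}$ along $B_iE$ and $(B_iE)^\perp$, one sees that $c_iB_i^*x_i$ splits compatibly with the orthogonal decomposition of $\R^n$ only after identifying $B_{i,E}$ and $B_{i,E^\perp}$ as the relevant pieces of $B_i$; the key algebraic point is that $\sum_i c_i B_i^* x_i$ has $E$-component governed by $\{B_{i,E}\}$ and $E^\perp$-component governed by $\{B_{i,E^\perp}\}$ up to a coupling term. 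The natural device is to fix $z\in E^\perp$, integrate first over $E$: for each fixed $z$ the sliced functions $y_i\mapsto f_i(\,\cdot\,)$ satisfy the reversed hypothesis for the datum on $E$, so $\prod_i(\int_E f_i^{(z)})^{c_i}\le C_{r,E}\int_E f(\,\cdot+z)\,\drm y$, and then the resulting functions of $z\in E^\perp$ satisfy the reversed hypothesis for the datum on $E^\perp$, giving another factor $C_{r,E^\perp}$. Carrying this through requires choosing the slices $f_i^{(z)}$ so that the product structure and the measurability are preserved; one can take $f_i^{(z_i)}(y_i)=f_i(y_i+z_i)$ where $z_i=q_i^*z$-type lifts are chosen consistently, and use Fubini together with the fact that $\dim E = \sum c_i\dim(B_iE)$ makes the sliced datum on $E$ itself satisfy the homogeneity condition, which is what makes $C_{r,E}$ finite and the iteration legitimate.

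For the lower bound $C_r \ge C_{r,E}\, C_{r,E^\perp}$, I would take near-extremal configurations on $E$ and on $E^\perp$ — for instance the Gaussian extremizers supplied by Theorem~\ref{rbl-thm} applied to each sub-datum, or near-optimal functions in general — and form tensor-product functions on $\R^n=E\oplus E^\perp$, e.g. $f_i(x_i) = g_i(\text{proj}_{B_iE}x_i)\, h_i(\text{proj}_{(B_iE)^\perp}x_i)$ and similarly $f(x)=g(\text{proj}_E x)\,h(\text{proj}_{E^\perp}x)$. One checks that if $g_i,g$ satisfy \eqref{RBL-hyp} for the $E$-datum and $h_i,h$ for the $E^\perp$-datum, then the products satisfy \eqref{RBL-hyp} for the full datum; the integrals factor, and $\int_{\R^n} f = \int_E g\cdot\int_{E^\perp} h$, so the ratio realized is the product of the two ratios. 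Taking suprema over configurations on each side gives $C_r \ge C_{r,E}\,C_{r,E^\perp}$. The criticality hypothesis $\dim E = \sum c_i\dim(B_iE)$ is again what guarantees both sub-data are homogeneous, so that the sub-constants are finite and the product construction is non-degenerate.

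The main obstacle I expect is the bookkeeping in the upper bound: verifying precisely that slicing $\R^n$ along $E\oplus E^\perp$ and slicing each $\R^{n_i}$ along $B_iE\oplus(B_iE)^\perp$ are compatible with the maps $c_iB_i^*$, i.e.\ that the reversed hypothesis \eqref{RBL-hyp} for $f_1,\dots,f_m,f$ really does degrade to the two separate reversed hypotheses for the restricted data. The subtlety is that $B_i^*$ does not respect the orthogonal splitting of $\R^n$ in general — $B_i^*((B_iE)^\perp)$ need not lie in $E^\perp$ — so one must argue via the adjoint identities $\langle B_{i,E}^* u, y\rangle = \langle u, B_i y\rangle$ for $y\in E$ and the analogous statement on $E^\perp$ with $q_i$, and track where the cross terms go; once that linear-algebra lemma is in place (it is where the definition of $B_{i,E^\perp}$ via $q_i\circ B_i$ is designed to help), the analytic part is a routine double application of Fubini and the definitions of $C_{r,E}$ and $C_{r,E^\perp}$. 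I would isolate that compatibility as a short preliminary computation and refer to the earlier Lemma~\ref{hahn-banach}-style manipulations for the Gaussian equality cases if needed.
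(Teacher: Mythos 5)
Your overall strategy matches the paper's: the direction $C_r \le C_{r,E}\,C_{r,E^\perp}$ is the "slicing" direction (which the paper, following Bennett--Carbery--Christ--Tao, declares straightforward and leaves to the reader), and the direction $C_{r,E}\,C_{r,E^\perp}\le C_r$ is attacked by tensoring near-extremal configurations on $E$ and $E^\perp$. But there is a genuine gap in your hard direction. You assert that if $g_i,g$ satisfy~\eqref{RBL-hyp} for the $E$-datum and $h_i,h$ for the $E^\perp$-datum, then the products $f_i(x_i)=g_i(\mathrm{proj}_{B_iE}x_i)\,h_i(\mathrm{proj}_{(B_iE)^\perp}x_i)$ and $f=g\otimes h$ satisfy~\eqref{RBL-hyp} for the full datum. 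This is false in general, and for exactly the reason you flag in your last paragraph but attribute only to the other direction: writing $x_i=y_i+z_i$ with $y_i\in B_iE$, $z_i\in(B_iE)^\perp$, one has $\mathrm{proj}_E\bigl(\sum c_iB_i^*x_i\bigr)=\sum c_iB_{i,E}^*y_i$, which is fine, but $\mathrm{proj}_{E^\perp}\bigl(\sum c_iB_i^*x_i\bigr)=\sum c_iB_{i,E^\perp}^*z_i+(\text{a cross term depending on the }y_i)$. The cross term shifts the argument of $h$, and the hypothesis $\prod h_i(z_i)^{c_i}\le h\bigl(\sum c_iB_{i,E^\perp}^*z_i\bigr)$ says nothing about $h$ at the shifted point, so the product configuration need not be admissible.

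The paper's resolution, which is the one idea your proposal is missing, is an anisotropic dilation: first reduce (via two preparatory lemmas) to compactly supported upper semi-continuous $f_i$ on $B_iE$ and $g_i$ on $(B_iE)^\perp$, with $f,g,h$ taken to be the associated sup-convolutions, and set $h_i(x+y)=f_i(x/\epsilon)\,g_i(y)$. Squeezing the $B_iE$-components makes the offending cross term $O(\epsilon)$ (uniformly, by compactness of the supports), so one gets $h(\epsilon x+y)\le f(x)\,g_\epsilon(y)$ where $g_\epsilon$ is a $K\epsilon$-enlargement of $g$; the $\epsilon^{\dim E}$ factors on the two sides cancel precisely because $E$ is critical, and $\int g_\epsilon\to\int g$ by upper semi-continuity and compact support. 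Without this device (or an equivalent one) your "one checks" step does not go through. A secondary point: you cannot in general invoke the Gaussian extremizers of Theorem~\ref{rbl-thm} for the sub-data, since condition~\eqref{john} may fail for them -- the whole purpose of the multiplicativity statement is to handle data without Gaussian extremizers -- so you must work with general near-optimal functions, which is exactly why the reduction to compactly supported upper semi-continuous functions is needed.
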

Bennett, Carbery, Christ and Tao proved
the corresponding property of $C$ and $C_g$,
we adapt their argument to prove the
multiplicativity of $C_r$. 
This adaptation is straightforward for the inequality
\[
C_r \leq C_{r,E} \times C_{r,E^\perp} 
\]
and is left to the reader (observe that 
criticality of $E$ is not even needed). 
We start the proof of the reversed inequality 
with a couple of simple observations.
\begin{lem}
\label{lem-app-un}
Upper semi-continuous functions having 
compact support saturate the reversed Brascamp-Lieb inequality. 
\end{lem}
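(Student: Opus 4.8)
The plan is to prove that upper semi-continuous functions with compact support are \emph{extremal} for the reversed Brascamp–Lieb inequality, in the sense that it suffices to verify \eqref{RBL} for such functions in order to obtain the inequality in full generality. More precisely, I would show that if \eqref{RBL} holds with some constant $C_r$ whenever $f_1,\dotsc,f_m$ are upper semi-continuous with compact support (and $f$ is chosen to satisfy \eqref{RBL-hyp}), then it holds for all non-negative measurable $f_1,\dotsc,f_m$. The natural strategy is approximation from below by ``nice'' functions, combined with a construction of a corresponding $f$ that still satisfies the pointwise constraint \eqref{RBL-hyp}.

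First I would fix non-negative measurable $f_1,\dotsc,f_m$ and $f$ satisfying \eqref{RBL-hyp}. Without loss of generality each $f_i$ can be assumed bounded (truncate $f_i$ at level $k$ and let $k\to\infty$ using monotone convergence on the left-hand sides; the truncated $f_i$ still satisfy \eqref{RBL-hyp} with the same $f$), and then, again by monotone convergence, it is enough to treat the case where each $f_i$ is the indicator of a bounded measurable set, up to taking simple functions and using the layer-cake representation — but the cleanest route is simply to approximate each bounded $f_i$ from below in $L^1$ by an upper semi-continuous compactly supported function. The standard fact here is that for a bounded non-negative integrable function one can find upper semi-continuous functions $g_i$ with compact support, $0\le g_i\le f_i$, and $\int f_i-\int g_i$ arbitrarily small; one way is to approximate the (bounded) super-level sets from inside by compact sets using inner regularity of Lebesgue measure, take finite combinations of their indicators, and note that a finite positive combination of indicators of compact sets is upper semi-continuous with compact support.

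The key step — and the one I expect to be the main obstacle — is to produce, from the $g_i$, a function $g$ that is upper semi-continuous with compact support and still satisfies $\prod_i g_i(x_i)^{c_i}\le g(\sum_i c_i B_i^* x_i)$, so that the hypothesis applies to the tuple $(g_1,\dotsc,g_m,g)$. The natural candidate is the ``extremal'' choice
\[
g\colon x\mapsto \sup\Bigl\{ \prod_{i=1}^m g_i(x_i)^{c_i} \ :\ \sum_{i=1}^m c_i B_i^* x_i = x\Bigr\},
\]
which tautologically satisfies the constraint. One then must check that (i) $g$ has compact support: since each $g_i$ is supported in some compact $K_i$, $g$ is supported in the bounded set $\{\sum_i c_i B_i^* x_i : x_i\in K_i\}$; and (ii) $g$ is upper semi-continuous: this is where some care is needed, but it follows from the fact that the supremum defining $g$ is effectively over the \emph{compact} set $\prod_i K_i$ intersected with the closed affine constraint, on which $(x_i)\mapsto \prod_i g_i(x_i)^{c_i}$ is upper semi-continuous, so the partial supremum (an image under a proper linear map of an u.s.c.\ function with compact support) is again u.s.c. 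Finally, since $g_i\le f_i$ and the defining formula is monotone, $g\le f$ pointwise, hence $\int g\le\int f$; applying the assumed inequality to $(g_1,\dotsc,g_m,g)$ gives $\prod_i(\int g_i)^{c_i}\le C_r\int g\le C_r\int f$, and letting the approximation parameter tend to $0$ (so $\int g_i\to\int f_i$) yields \eqref{RBL} for the original tuple, completing the reduction.
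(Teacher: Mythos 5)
Your proposal is correct and follows essentially the same route as the paper: approximate each $f_i$ from below, via inner regularity of Lebesgue measure, by a non-negative finite combination of indicators of compact sets (hence upper semi-continuous with compact support) losing an arbitrarily small fraction of the integral, and then pass to the associated sup-function $g$, whose upper semi-continuity and compact support the paper defers to Lemma~\ref{lem-app-deux}. The only difference is that you spell out the ``follows easily'' step (the domination $g\le f$ and the limit in $\epsilon$), which the paper leaves implicit.
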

\begin{proof} 
The regularity of the Lebesgue measure implies that 
given a non-negative integrable function $f_i$ 
on $\R^{n_i}$ and $\epsilon >0$
there exists a non-negative linear combination of
 indicators of compact sets $g_i$ 
 satisfying
\[
g_i \leq f_i \quad \text{and} \quad 
\int_{\R^{n_i}}  f_i  \, \drm x \leq (1+\epsilon) \int_{\R^{n_i}} g_i \, \drm x . 
\] 
The lemma follows easily. 
\end{proof}
The proof of the following lemma 
is left to the reader. 
\begin{lem}
\label{lem-app-deux}
If $f_1,\dotsc,f_m$ are upper semi-continuous functions 
on $\R^{n_1}, \dotsc , \R^{n_m}$ respectively, then 
the function $f$ defined on $\R^n$ by
\[
f (x) =  \sup  \Bigl( \prod_{i=1}^m f_i ( x_i )^{c_i} , \ \sum_{i=1}^m c_i B_i^* x_i = x Ê\Bigr) , 
\]
is upper semi-continuous as well. 
\end{lem}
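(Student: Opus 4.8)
The plan is to regard $f$ as the marginal supremum of a single upper semi-continuous function living on the product space, and then to analyse its superlevel sets by pushing them forward through a linear map. Write $V = \R^{n_1} \times \dotsb \times \R^{n_m}$, let $L \colon V \to \R^n$ be the linear map $L(x_1,\dotsc,x_m) = \sum_{i=1}^m c_i B_i^* x_i$, and set $F(x_1,\dotsc,x_m) = \prod_{i=1}^m f_i(x_i)^{c_i}$, so that $f(x) = \sup\{ F(v) : v\in V, \ L(v) = x\}$. I would establish upper semi-continuity of $f$ by showing that each superlevel set $\{f \geq \alpha\}$ is closed. A preliminary step is to check that $F$ is upper semi-continuous on $V$: each map $x_i \mapsto f_i(x_i)^{c_i}$ is non-negative and upper semi-continuous, because $t \mapsto t^{c_i}$ is continuous and non-decreasing on $[0,+\infty)$; and a product of non-negative upper semi-continuous functions depending on disjoint blocks of variables is again upper semi-continuous, since such functions are locally bounded from above and $\limsup (ab) \leq (\limsup a)(\limsup b)$ for non-negative quantities.

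The heart of the matter is the identity $\{ f \geq \alpha \} = L(\{ F \geq \alpha\})$, valid for $\alpha > 0$. The inclusion $\supseteq$ is immediate from the definition of $f$, and the reverse inclusion holds provided the supremum defining $f(x)$ is attained whenever it is positive. Granting this, the set $\{F \geq \alpha\}$ is closed (because $F$ is upper semi-continuous) and contained in the compact box $\prod_{i=1}^m \mathrm{supp}(f_i)$, hence compact; its image under the continuous map $L$ is therefore compact, in particular closed. Consequently $\{ f \geq \alpha\}$ is closed for every $\alpha > 0$, while for $\alpha \leq 0$ the set $\{ f \geq \alpha\}$ equals all of $\R^n$ (the fibres of $L$ being non-empty and $f$ non-negative, with the convention $\sup \emptyset = 0$), which is closed as well. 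This gives the upper semi-continuity of $f$.

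The one genuinely delicate point, and what I expect to be the main obstacle, is the compactness of the superlevel sets $\{F \geq \alpha\}$: it is exactly this that guarantees simultaneously that the fibre-wise supremum is attained (so that $\{f\ge\alpha\}=L(\{F\ge\alpha\})$ holds with equality rather than mere inclusion) and that the superlevel set is bounded (so that its $L$-image is closed). For a merely upper semi-continuous $F$ the maximising configuration could escape to infinity, and the image under $L$ of a closed but unbounded set need not be closed, so the statement genuinely rests on compactness. This is supplied by the compact support of the functions $f_i$, which is precisely the situation in which the lemma is invoked, the $f_i$ being the non-negative combinations of indicators of compact sets furnished by Lemma~\ref{lem-app-un}; I would make this reduction explicit at the outset and then run the superlevel-set argument above.
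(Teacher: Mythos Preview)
The paper leaves this proof to the reader, so there is no argument to compare against. Your superlevel-set approach via the linear push-forward $L$ is correct and is the natural one: upper semi-continuity of $F$ on the product, compactness of $\{F\geq\alpha\}$ for $\alpha>0$, and continuity of $L$ combine to give that $\{f\geq\alpha\}=L(\{F\geq\alpha\})$ is closed.

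You are also right to flag compact support as the essential hypothesis, and in fact the lemma as printed is missing it: without compact support the statement is false. For instance, with $m=2$, $n=n_1=n_2=1$, $c_1=c_2=\tfrac12$, $B_1=B_2=\mathrm{id}$, take $f_1$ the indicator of $\{1,2,3,\dotsc\}$ and $f_2$ the indicator of $\{-k+1/k : k\geq 1\}$; both sets are closed so both functions are upper semi-continuous, yet $\{f\geq 1\}$ contains every $1/(2k)$ but not $0$. Since the paper only ever applies the lemma to the compactly supported functions supplied by Lemma~\ref{lem-app-un}, your plan to add this assumption at the outset and then run the argument above is exactly what is needed, and with it the proof goes through cleanly.
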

\begin{rem}
If the Brascamp-Lieb datum happens to be \emph{degenerate},
in the sense that the map $(x_1 , \dotsc, x_m ) \mapsto \sum_{i=1}^m B_i^*x_i$
is not onto, then Brascamp-Lieb constants are easily
seen to be $+\infty$. Still the previous lemma
remains valid, provided the convention $\sup ( \emptyset )= 0$
is adopted. 
\end{rem}
Let us prove that 
$C_{r,E} \times C_{r,E^\perp} \leq C_r$. 
By Lemma~\ref{lem-app-un}, it is enough to prove that 
the inequality
\[
\prod_{i=1}^m \Bigl( \int_{B_iE} f_i \, \drm x \Bigr)^{c_i} 
\times \prod_{i=1}^m \Bigl( \int_{(B_i E) ^\perp} g_i \, \drm x \Bigr)^{c_i} 
\leq C_r  \Bigl( \int_E f \, \drm x \Bigr) \Bigl(  \int_{E^\perp} g \, \drm x \Bigr) . 
\]
holds for every compactly supported upper semi-continuous
functions $(f_i)_{i\leq m}$ and $(g_i)_{i\leq m}$, where $f$ and $g$
are defined by
\[
\begin{split}
f \colon & x\in E 
\mapsto  \sup \Bigl ( \prod_{i=1}^m f_i (  x_ i )^{c_i} , \ \sum_{i=1}^m c_i (B_{i,E} )^* x_i = x  \Bigr)  \\
g \colon & y \in E^\perp 
\mapsto  \sup \Bigl ( \prod_{i=1}^m g_i (  y_ i )^{c_i} , \ \sum_{i=1}^m c_i (B_{i,E^\perp} )^* x_i = y \Bigr) . 
\end{split}
\]
Let $\epsilon >0$. For $i\leq m$ define a function $h_i$ on $\R^{n_i}$ by
\[
 h_ i ( x + y ) = f_i ( x / \epsilon ) g_i (y ) , \quad \forall x\in B_i E , \, \forall y \in (B_iE)^\perp , 
\]
and let 
\[
h \colon z \in \R^n \mapsto \sup \Bigl ( \prod_{i=1}^m h_i (  z_ i )^{c_i} ,
    \ \sum_{i=1}^m c_i B_i^* z_i =z  \Bigr) .
\]
By definition of the reversed Brascamp-Lieb constant $C_r$
\begin{equation}
\label{step2-mult-rbl}
\prod_{i=1}^m \Bigl( \int_{\R^{n_i} } h_i \, \drm x \Bigr)^{c_i} \leq C_r \int_{\R^n} h \, \drm x . 
\end{equation}
Using the equality $\sum_{i=1}^m  c_i \dim ( B_i E ) = \dim ( E )$ we get
\[
\epsilon^{-\dim ( E ) } \prod_{i=1}^m  \Bigl( \int_{\R^{n_i} } h_i \, \drm x\Bigr)^{c_i }
= \prod_{i=1}^m  \Bigl( \int_{ B_i E } f_i \, \drm x \Bigr)^{c_i} \times
\prod_{i=1}^m \Bigl( \int_{ (B_i E)^\perp } g_i \, \drm x \Bigr)^{c_i} . 
\]
On the other hand, we let the reader check that 
for every $x\in E, y\in E^\perp$ 
\[
h ( \epsilon x  + y ) \leq f ( x) g_{\epsilon} (y)  ,
\]
where 
\[
g_\epsilon ( y ) = \sup \bigl( g (y') , \ \abs{y-y'} \leq K \epsilon \bigr) 
\]
and $K$ is a constant depending on the diameters of the supports of the functions $f_i$. 
Therefore 
\[
\epsilon^{-\dim E} \int_{\R^n} h \, \drm x
= \int_{E \times E^\perp} h( \epsilon x + y ) \, \drm x \drm y 
\leq \Bigl( \int_E f \, \drm x \Bigr) \Bigl( \int_{E^\perp} g_{\epsilon} \, \drm x \Bigr)  . 
\]
Inequality~\eqref{step2-mult-rbl} becomes
\[
\prod_{i=1}^m \Bigl( \int_{B_iE} f_i \, \drm x \Bigr)^{c_i} 
\times \prod_{i=1}^m \Bigl( \int_{(B_i E) ^\perp} g_i \, \drm x \Bigr)^{c_i} 
\leq C_r  \Bigl( \int_E f \, \drm x \Bigr) \Bigl(  \int_{E^\perp} g_{\epsilon} \, \drm x \Bigr)  . 
\]
Clearly $g$ has compact support, and $g$ is upper semi-continous 
by Lemma~\ref{lem-app-deux}. This implies easily that
\[
\lim_{\epsilon \rightarrow 0} \int_{E^\perp} g_{ \epsilon} \, \drm x = \int_{E^\perp} g \, \drm x ,
\]
which concludes the proof. 
\footnotesize

\end{document}